\newtheorem{theorem}{Theorem}
\newtheorem{lemma}[theorem]{Lemma}
\newtheorem{proposition}[theorem]{Proposition}
\newtheorem{remark}[theorem]{Remark}
\newcommand{\col}{ \text{\normalfont{Col}} }
\newcommand{\row}{ \text{\normalfont{Row}} }
\newcommand{\swap}{ \text{\normalfont{swap}} }
\newcommand{\wmnop}{ w_\text{\normalfont{MNOP}} }
\newcommand{\wjack}{ w_\text{\normalfont{Jack}} }
\title{{Jack combinatorics of the equivariant edge measure}}
\author{Kyla Pohl and Ben Young \\ \small{University of Oregon}}
\date{\today}
\newcommand\ackname{Acknowledgements}
  \newenvironment{acknowledgements}{%
      \titlepage
      \null\vfil
      \@beginparpenalty\@lowpenalty
      \begin{center}%
        \bfseries \ackname
        \@endparpenalty\@M
      \end{center}}%
     {\par\vfil\null\endtitlepage}
  \newenvironment{acknowledgements}{%
      \if@twocolumn
        \section*{\abstractname}%
      \else
        \small
        \begin{center}%
          {\bfseries \ackname\vspace{-.5em}\vspace{\z@}}%
        \end{center}%
        \quotation
      \fi}
      {\if@twocolumn\else\endquotation\fi}
\begin{document}

\maketitle

\begin{abstract}
    We study the equivariant edge measure: a measure on partitions which arises implicitly in the edge term in the localization computation of the Donaldson-Thomas invariants of a toric threefold.  We combinatorially show that the equivariant edge measure is, up to choices of convention, equal to the Jack-Plancherel measure.

\end{abstract}

\section{Introduction}\label{introductionmotivation}

We study a measure on integer partitions called the \emph{equivariant edge measure}.  Our work is motivated by a problem in plane partition combinatorics that is well-known to enumerative geometers, but perhaps less well known to combinatorialists: the \emph{equivariant vertex measure}. We first describe this motivating problem combinatorially.

\subsection{Motivation: The Equivariant Vertex Measure}

In their 2006 papers (\cite{MNOP1, MNOP2}), Maulik, Nekrasov, Okounkov, and Pandharipande (hereafter, MNOP) compute the Donaldson-Thomas partition function of a threefold with a torus action; they use Atiyah-Bott localization to reduce the computation to the torus fixed loci of the action.  One result of their calculations is an exotic measure $\mathsf{w}$ on plane partitions, called the \emph{equivariant vertex measure}.  This measure has has three parameters $u, v, w$ coming from the torus action.  The equvariant vertex measure of a plane partition $\pi$ is a certain rational function in $u, v, w$, defined as follows: first, we temporarily choose 3 other formal parameters, $r, s, t$.  We write 
$$Q = \sum_{(i,j,k) \in \pi} r^is^jt^k \qquad \text{and} \qquad \overline{Q} = \sum_{(i,j,k) \in \pi} r^{-i}s^{-j}t^{-k},$$
where both sums are over the boxes in the 3D Young diagram corresponding to $\pi$.  Next, write $$
F = Q - \frac{\overline{Q}}{r s t} + Q\overline{Q}\frac{(1-r)(1-s)(1-t)}{rst}
=: \sum_{i,j,k}c_{ijk}r^{i}s^{j}t^{k}.
$$
One checks that $F$ is a Laurent polynomial with no constant term, so it makes sense to ``swap the roles of addition and multiplication'' in this formula. 
As far as we understand the geometry, the expression for $F$ appears in the computation of the character of the torus action, and the ``swap'' happens in the process of the localization calculation of the Donaldson-Thomas invariants from this character; however, for the purposes of this note, the swap is a purely formal operation.
We may finally define the \textit{equivariant vertex measure}:
$$\mathsf{w}(\pi) = \prod_{i,j,k}(i u + j v + k w)^{- c_{ijk}}.$$  
One of the results of \cite{MNOP2} is a closed form for the partition function of the equivariant vertex measure. Namely, let $|\pi|$ denote the number of boxes in the 3D young diagram for $\pi$. Then
\begin{align}\label{eq:Z}
    Z := \sum_{\pi} \mathsf{w}(\pi) q^{|\pi|}= M(q)^{- \frac{(u+v)(v+w)(w+u)}{uvw}}, \qquad \text{where} \qquad M(q) = \prod_{i \geq 1} \left( \frac{1}{1-q^i}\right)^i.
\end{align}
is Macmahon's generating function for plane partitions \cite{Macmahon}.  Note also that our notation differs from that in~\cite{MNOP1,MNOP2}: we use different variable names, and we use $q$ in place of $-q$.

Such a closed formula as \eqref{eq:Z}, in principle, puts $\mathsf{w}$ into the class of \emph{exactly solved} models in statistical mechanics.  That is, $\mathsf{w(\pi)}/Z$ is an explicit \emph{probability} measure on plane partitions $\pi$.  One might hope to say essentially anything about $\mathsf{w(\pi)}/Z$ -- the limiting shape of a typical plane partition, for instance, or the probability of finding a particular local configuration of boxes in its 3D Young diagram.  Indeed, although the definition of $\mathsf{w}(\pi)$ is completely combinatorial, the proof of the formula for $Z$ is intricate and geometric.  One would like a purely combinatorial proof.  Unfortunately, all of these hopes are currently well out of reach, based on current combinatorics knowledge.  Note that in the so-called \emph{Calabi-Yau specialization}, where $u+v+w = 0$, everything simplifies directly:  we have $\mathsf{w}(\pi) \equiv 1$ and $Z = M(q)$, recovering Macmahon's formula; random plane partitions under this measure have been extensively studied by many authors (see for instance \cite{kenyon-okounkov-sheffield, okounkov-reshetikhin} for this precise problem, or \cite{gorin} for a survey).

\subsection{The Equivariant Edge Measure}
\label{sec:intro_dt}
    
Having introduced $\mathsf{w}(\pi)$ and proclaimed it to be hard to study, we now turn to a two-dimensional version of this problem. The subject of this note is the analogue of the equivariant vertex measure for ordinary partitions which implicitly arises in~\cite{MNOP1}. More specifically, it arises in the ``edge term'', coming from a torus-invariant \emph{line} in the localization calculation.
 Given a partition $\lambda$, as in \cite[Section 4.8]{MNOP1} define generating functions
\begin{align}
    Q(\lambda) = \sum_{(i,j)\in\lambda} r^is^j \qquad \text{ and } \qquad \overline{Q}(\lambda) = \sum_{(i,j)\in\lambda} r^{-i}s^{-j}
\end{align}
where the sums are taken over the coordinates of all cells in $\lambda$. Then, following~\cite[Equation (12)]{MNOP1}, define 
\begin{align}
    F(\lambda) = - Q(\lambda) - \frac{\overline{Q}(\lambda)}{rs} + \frac{Q(\lambda)\overline{Q}(\lambda)(1-r)(1-s)}{rs}.
\end{align}
We then swap the role of addition and multiplication much as before, though for convenience in stating the main result, we include a minus sign on $v$, and omit one in the exponent (as compared with the definition of $\mathsf{w})$.
Given an index set $A$ and a Laurent polynomial $G = \sum_{(i,j) \in A}c_{i,j}  r^{i} s^{j}$ in the variables $r$ and $s$ with no constant term, we define the swap operation as follows:
\begin{align}
\label{eqn:swaperation}
    \text{swap}(G) = \text{swap}\left(\sum_{(i,j) \in A}c_{i,j} r^{i} s^{j} \right) = \prod_{(i,j) \in A}(iu-jv)^{c_{i,j}}
\end{align}
We then define the \textit{equivariant edge measure} $\wmnop$ to be the swap operation applied to $F(\lambda)$:
\begin{align}
\label{eqn:def of wmnop}
    \wmnop(\lambda) := \swap(F(\lambda)).
\end{align}

\subsection{The Jack Plancherel Measure}

We now introduce a second, \emph{a priori different}, measure on Young diagrams. The \emph{Jack-Plancherel measure} was first studied in~\cite{kerov1997anisotropic}; it is a probability measure on partitions of $n$, defined by
\begin{align}\label{eq:jackplancherel}
    \wjack(\lambda) := \frac{1}{\prod_{(i,j)\in\lambda} h^*(i,j) h_*(i,j)}.
\end{align}
The upper and lower hook lengths $h^*$ and $h_*$ are in turn defined carefully in Section~\ref{backgroundpartitionsandhooklengths}. They are two perturbations of the standard hook length of a cell in a Young diagram, and they depend on a single parameter $\beta$ (which we homogenize using two parameters $u,v$).

The Jack Plancherel measure is described in~\cite{borodin-olshanski} as a natural perturbation of ordinary Plancherel measure.  Okounkov~\cite{okounkov} also advanced the viewpoint that the Plancherel measure is a close analogue of the Gausiann Unitary ensemble from random matrix theory and shares many of the same universal asymptotics; in this framework the Jack-Plancherel measure should have many of the universal asymptotics of the random-matrix-theoretic $\beta$ ensembles.  Indeed, the bulk asymptotics were later computed in~\cite{Do_ga_2016} and the edge asymptotics in~\cite{guionnet2017rigidityedgeuniversalitydiscrete}.  See also \cite{BenDaliDolega2023} for a recent interpretation of Jack character values (of which the Jack-Plancherel measure is one) in terms of enumerations of maps on surfaces.

\subsection{Main Result and Outline}
Our main result is that the equivariant edge measure $\wmnop$ is the Jack Plancherel measure. 
\begin{theorem}\label{mainresult}
    The Jack Plancherel measure of a partition $\lambda$ is the same as the equivariant edge measure of $\lambda$ up to a sign, i.e. 
    \begin{align}
        \wjack(\lambda) = -\wmnop(\lambda).
    \end{align}
\end{theorem}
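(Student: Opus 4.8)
The plan is to compute $F(\lambda)$ completely explicitly as a Laurent polynomial, apply $\swap$ to it term by term, and then recognize the result as $-\wjack(\lambda)$ via the hook-length formula \eqref{eq:jackplancherel}. The key identity I would establish is that, for every partition $\lambda$,
\begin{align*}
    F(\lambda) = -\sum_{(i,j)\in\lambda}\left( r^{-l(i,j)-1}\,s^{a(i,j)} + r^{l(i,j)}\,s^{-a(i,j)-1}\right),
\end{align*}
where $a(i,j)$ and $l(i,j)$ denote the arm and leg of the cell $(i,j)$; in words, $-F(\lambda)$ is a sum of exactly $2|\lambda|$ monomials (possibly with repetition), one ``arm monomial'' and one ``leg monomial'' for each cell. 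Equivalently, $-F(\lambda)$ is, up to inverting the torus weights, the character of the tangent space to the Hilbert scheme of $|\lambda|$ points on $\C^2$ at the monomial ideal $I_\lambda$, and the identity above is its classical Ellingsrud--Str\o mme/Nakajima description; but since this is a combinatorics paper I would want a self-contained derivation. Before anything else I would compute $F(\lambda)$ by hand for $\lambda=(1),(2),(1,1),(2,1)$ to fix all sign and arm-versus-leg conventions (the minus sign on $v$ in $\swap$ and the row/column orientation of $\lambda$ both have to be chosen consistently).

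To prove the identity directly, I would write $Q(\lambda)$ as a sum of finite geometric series over the rows of $\lambda$ (and $\overline Q(\lambda)$ similarly), so that $(1-r)(1-s)Q(\lambda)$ telescopes into a signed sum supported on the inner and outer corners of the boundary of $\lambda$, with an analogous expression for $(1-r)(1-s)\overline Q(\lambda)$. Substituting these, together with the factorization $Q\overline Q = Q\cdot\overline Q$, into $F(\lambda) = -Q(\lambda) - \overline Q(\lambda)/(rs) + Q(\lambda)\overline Q(\lambda)(1-r)(1-s)/(rs)$ and carrying out the cancellation leaves precisely the arm/leg sum above. Once the identity is in hand, $\swap$ is applied term by term: every monomial of $F(\lambda)$ carries a negative coefficient (namely $-1$ times its multiplicity in the sum), so
\begin{align*}
    \wmnop(\lambda) = \swap(F(\lambda))
    &= \prod_{(i,j)\in\lambda}\frac{1}{\big((-l(i,j)-1)u - a(i,j)v\big)\big(l(i,j)u - (-a(i,j)-1)v\big)} \\
    &= \prod_{(i,j)\in\lambda}\frac{-1}{\big((l(i,j)+1)u+a(i,j)v\big)\big(l(i,j)u+(a(i,j)+1)v\big)}.
\end{align*}

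It then remains to compare this with $\wjack(\lambda) = 1/\prod_{(i,j)\in\lambda} h^*(i,j)h_*(i,j)$, unwinding the homogenized definitions of the upper and lower hook lengths from Section~\ref{backgroundpartitionsandhooklengths}: each factor $h^*(i,j)h_*(i,j)$ is, up to sign, one of the per-cell products $\big((l(i,j)+1)u+a(i,j)v\big)\big(l(i,j)u+(a(i,j)+1)v\big)$ appearing above, and collecting these signs against the per-cell factors of $-1$ coming from $\swap$ yields the claimed relation $\wjack(\lambda) = -\wmnop(\lambda)$. The main obstacle is the first step: the final identity is clean, but the cancellation inside $F(\lambda)$ --- especially in the quadratic term $Q\overline Q$ --- is delicate, and the real content of the proof lies in organizing it (via the corner/telescoping description of $(1-r)(1-s)Q(\lambda)$, or by matching against the Hilbert-scheme tangent character). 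The remaining steps are then essentially bookkeeping, the only genuine pitfall being the sign accounting in the last step.
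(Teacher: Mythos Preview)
Your approach is correct in outline and genuinely different from the paper's. The paper never writes down a closed form for $F(\lambda)$; instead it argues inductively on Young's lattice. It computes the two ratios $\wjack(\lambda)/\wjack(\mu)$ and $\wmnop(\lambda)/\wmnop(\mu)$ for $\mu\subset\lambda$ differing by a single box, expresses each as a product over the inside corners of $\lambda$ (Propositions~\ref{JPWR} and~\ref{MNOPWR}), shows the two products coincide (Theorem~\ref{ratiotheorem}), and then inducts from the base case $\lambda=(1)$. The point is that $F(\lambda)-F(\mu)$ is much easier to handle than $F(\lambda)$ itself: the quadratic $Q\overline Q$ term collapses to linear expressions in the corner polynomial $C(\lambda)$ of Lemma~\ref{cornerpolynomial}, so the delicate cancellation you identify in $Q\overline Q$ is sidestepped entirely. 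Your route trades this for the Ellingsrud--Str{\o}mme/Nakajima identity for $F(\lambda)$, which is conceptually cleaner and makes the Hilbert-scheme connection explicit, but---as you say---is where all the work lives; once that identity is in hand, the rest really is bookkeeping.

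One genuine issue to flag: your own displayed formula gives
\[
\wmnop(\lambda)=\prod_{(i,j)\in\lambda}\frac{-1}{h_*(i,j)\,h^*(i,j)}=(-1)^{|\lambda|}\,\wjack(\lambda),
\]
since each cell contributes exactly one factor of $-1$. This is \emph{not} the constant sign $\wjack(\lambda)=-\wmnop(\lambda)$ asserted in the statement; it agrees only when $|\lambda|$ is odd (check $\lambda=(2)$: your $F(\lambda)=-r^{-1}-s^{-1}-r^{-1}s-s^{-2}$ gives $\wmnop((2))=+\wjack((2))$). So the ``sign accounting in the last step'' that you call a pitfall is in fact a real discrepancy you must resolve---either your arm/leg identity needs its conventions adjusted to absorb the extra signs, or the global sign in the theorem is $(-1)^{|\lambda|}$ rather than $-1$. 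Do the small examples you proposed and pin this down before writing up.
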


It is no surprise to see the Jack-Plancherel measure arising in enumerative geometry.  For instance, Okounkov~\cite{okounkov} described how the Jack-Plancherel measure appears in Seiberg-Witten theory, and in the study of the Hilbert scheme of points in the plane. The latter relationship is surely closely related to our work, since this Hilbert scheme appears also as a fundamental object in the computation of the Donaldson-Thomas invariants in ~\cite{MNOP1, MNOP2}.  Indeed, it would not entirely shock us to learn that our result is known to geometers; however, we were not able to find a clear statement or a proof in the literature. Moreover, as both sides of Theorem~\ref{mainresult} may be defined completely combinatorially, one would like a combinatorial proof.  We provide such a proof here.

In Section~\ref{backgroundinfoandnotation}, we provide background information on partitions and hook lengths as well as the necessary components of the work of MNOP in \cite{MNOP1} and \cite{MNOP2} and full definition of our ``swap'' operation, the equivariant vertex measure, and the Jack Plancherel measure. Our main results appear in Section~\ref{mainresults}, in which we provide ratios of measures for growing a partition by one box in both the Jack Plancherel measure and the equivariant edge measure. We then show that the ratios of measures are equal. Routine induction reveals that the equivariant edge measure is the Jack Plancherel measure up to a sign.

\section{Background Information and Notation} \label{backgroundinfoandnotation}

This section explains necessary background information and set up to introduce the main results in the next section. We follow the exceptional exposition of Stanley from \cite{RS}; more information about the Jack symmetric functions can be found in \cite[Chapter 10]{M}.

\subsection{Partitions and Hook Lengths} \label{backgroundpartitionsandhooklengths}

A \textit{partition} $\lambda$ is a vector $(\lambda_1, \lambda_2, \lambda_3, \dots)$ with (not necessarily strictly) decreasing natural number entries only finitely many of which are nonzero. The \textit{size} of $\lambda$ is 
\begin{align}
    |\lambda| = \lambda_1 + \lambda_2 + \cdots . \notag
\end{align}
If $|\lambda| = n$ then we say that $\lambda$ \textit{partitions} $n$, denoted $\lambda \vdash n$. Each of the $\lambda_i$ is called a \textit{part} of $\lambda$. The number of nonzero parts of $\lambda$ is said to be the \textit{length} of $\lambda$, denoted $\text{len}(\lambda)$. In this paper, we identify a partition $\lambda$ with its \textit{Young} (or \textit{Ferrers}) \textit{diagram} in English notation with zero-indexed matrix coordinates, $(\text{row}, \text{column})$:
\begin{align}
    \lambda = \{ (i,j) \; | \; 0 \leq i \leq \text{len}(\lambda)-1, \; 0 \leq j \leq \lambda_i-1 \}. \notag
\end{align}
The Young diagram for the partition $\lambda = (3,3,1)$ is shown in Figure \ref{fig:exampleydiagram}.

\begin{figure}[ht]
    \begin{center}
            \begin{tikzpicture}[scale=0.5]
                \draw (0,0) -- (0,3);
                \draw (1,0) -- (1,3);
                \draw (2,1) -- (2,3);
                \draw (3,1) -- (3,3);
                \draw (0,0) -- (1,0);
                \draw (0,1) -- (3,1);
                \draw (0,2) -- (3,2);
                \draw (0,3) -- (3,3);
            \end{tikzpicture}
    \end{center}
    \caption{The diagram for $\lambda = (3,3,1)$, whose cells are: \{(0,0), (0,1), (0,2), (1,0), (1,1), (1,2), (2,0)\}.}
    \label{fig:exampleydiagram}
\end{figure}
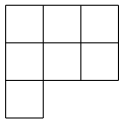

Given two partitions $\lambda$ and $\mu$, we write $\mu\subseteq\lambda$ if the $\mu_i \leq \lambda_i$ for all $i$. In other words, $\mu\subseteq\lambda$ if the diagram of $\mu$ fits inside the diagram of $\lambda$ when they are overlaid with their upper left cells aligned. The partially ordered set defined by the relation $\subseteq$ is called \textit{Young's Lattice}. The \textit{conjugate} $\lambda'$ of a partition $\lambda$ is the partition with diagram 
\begin{align}
    \lambda = \{ (j,i) \; | \; 0 \leq i \leq \text{len}(\lambda)-1, 0 \leq j \leq \lambda_i-1 \}, \notag
\end{align}
i.e., $\lambda'$ is the partition obtained by reflecting $\lambda$ across the northwest to southeast diagonal. Given a cell $(i,j)$ in $\lambda$, the \textit{arm length} and \textit{leg length} of $(i,j)$ are 
\begin{align}
    a_\lambda((i,j)) = \lambda_i - j, \qquad \ell_\lambda((i,j)) = \lambda'_j - i \notag
\end{align}
respectively.

Suppose that the cell $(i,j)$ is in $\lambda$. Then the \textit{hook length} of a cell $\square\in\lambda$ is 
\begin{align}
    h_\lambda(\square) = a_\lambda(\square)+\ell_\lambda(\square)+1. \notag
\end{align}
The partition $\lambda$ may be suppressed from the notation for arm length, leg length, and hook length if it is clear from context.
For a partition $\lambda$ define the \textit{upper} and \textit{lower hook lengths} as
\begin{align}
    h^*_\lambda(\square) &= u \cdot \ell(\square)+v(a(\square)+1) \notag \\
    h_*^\lambda(\square) &= u(1+ \ell(\square))+v \cdot a(\square) \notag
\end{align}
respectively. Note that both the upper and lower hook lengths specialize to the standard hook length on Young diagrams when $u$ and $v$ are both $1$. Indeed, this can be viewed as a homogenization of the Jack hook lengths described in \cite{RS}.  In the notation of~\cite{RS}, the Jack parameter is $\alpha$, which for us is equal to $v/u$.

\begin{figure}[ht]
    \centering
        \begin{tikzpicture}[scale=1.5]
            \draw (0,0) -- (0,2);
            \draw (1,0) -- (1,2);
            \draw (2,0) -- (2,2);
            \draw (3,1) -- (3,2);
            \draw (0,0) -- (2,0);
            \draw (0,1) -- (3,1);
            \draw (0,2) -- (3,2);
            \node at (0.5,0.6) {$2v$};
            \node at (0.5,0.4) {$v+u$};
            \node at (1.5,0.6) {$v$};
            \node at (1.5,0.4) {$u$};
            \node at (0.5,1.6) {$3v+u$};
            \node at (0.5,1.4) {$2v+2u$};
            \node at (1.5,1.6) {$2v+u$};
            \node at (1.5,1.4) {$v+2u$};
            \node at (2.5,1.6) {$v$};
            \node at (2.5,1.4) {$u$};
        \end{tikzpicture}
    \caption{The upper and lower hook lengths of each box in $\mu = (3,2)$.}
    \label{fig:exampleupperlowerhooklength}
\end{figure}

The Jack Plancherel measure $\wjack(\mu)$ (as defined in \eqref{eq:jackplancherel}) of the partition $\mu$ in Figure \ref{fig:exampleupperlowerhooklength} is the product of the inverses of each of the expressions printed in the cells of $\mu$. 

\subsection{Corners}

We say a cell $(i,j)$ in a partition $\lambda$ is an \textit{inside corner} of $\lambda$ if neither $(i+1,j)$ nor $(i,j+1)$ is in $\lambda$. We also add two artificial inside corners with coordinates $(-1,\infty)$ and $(\infty,-1)$. Similarly, we say a cell $(i,j)\not\in\lambda$ is an \textit{outside corner} of $\lambda$ if any of the following hold:
\begin{itemize}
    \item both of $(i-1,j)$ and $(i,j-1)$ are in $\lambda$
    \item $(i,j) = (1,\lambda_1+1)$
    \item $(i,j) = (\lambda'_1+1,1)$.
\end{itemize}

Label the inside corners of $\lambda$ from the bottom left to the top right as
\begin{align}\label{corners}
    (\rho_0, \gamma_0), (\rho_1, \gamma_1), (\rho_2, \gamma_2), \dots, (\rho_m, \gamma_m), (\rho_{m+1}, \gamma_{m+1})
\end{align}
where $\rho_0 = \gamma_{m+1} = \infty$ and $\rho_{m+1} = \gamma_0 = -1$. This means the outside corners are
\begin{align}
    (\rho_1 + 1, 0), (\rho_2 + 1, \gamma_1 + 1), (\rho_3 + 1, \gamma_2 + 1), \dots, (0, \gamma_m + 1). \notag
\end{align}

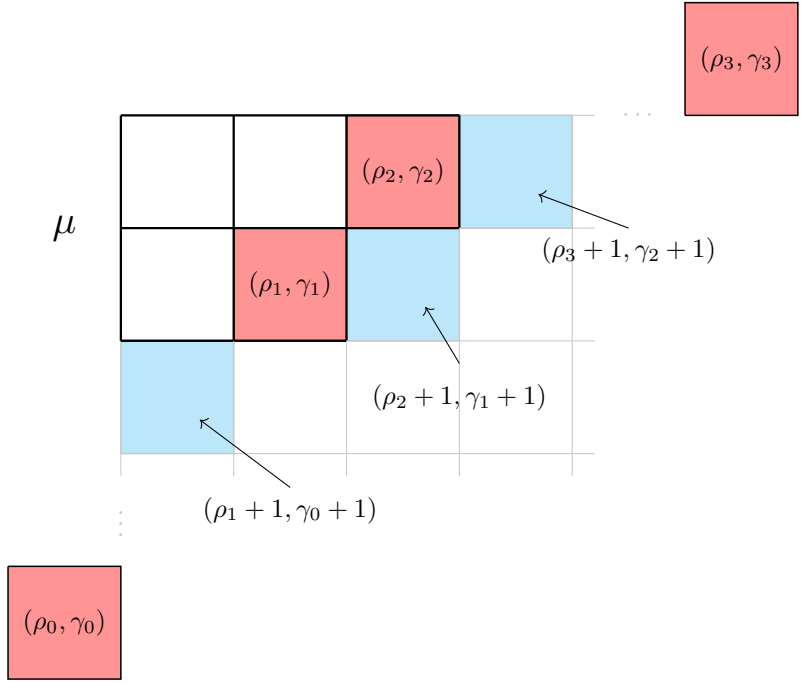
\begin{figure}[ht]
    \centering
        \begin{tikzpicture}[scale=1.5]
            \node[scale = 1.5] at (-0.5,1) {$\mu$};
            \draw [color = gray!42] (0,0) -- (0,-1.2);
            \node [color = gray!42] at (0,-1.55) {$\vdots$};
            \draw [color = gray!42] (0,2) -- (4.2,2);
            \node [color = gray!42] at (4.6, 2) {$\cdots$};
            \draw [line width=0.2mm,fill=red!42, ] (1,0) -- (1,1) -- (2,1) -- (2,0);
            \draw [line width=0.2mm,fill=red!42, ] (2,1) -- (2,2) -- (3,2) -- (3,1);
            \draw [line width=0.2mm,fill=red!42, ] (-1,-2) -- (0,-2) -- (0,-3) -- (-1,-3) -- (-1,-2);
            \draw [line width=0.2mm,fill=red!42, ] (5,2) -- (5,3) -- (6,3) -- (6,2) -- (5,2);
            \draw [line width=0.2mm, color = gray!42, fill=cyan!24, ] (0,-1) -- (0,0) -- (1,0) -- (1,-1) -- (0,-1);
            \draw [line width=0.2mm, color = gray!42, fill=cyan!24, ] (2,0) -- (2,1) -- (3,1) -- (3,0) -- (2,0);
            \draw [line width=0.2mm, color = gray!42, fill=cyan!24, ] (3,1) -- (3,2) -- (4,2) -- (4,1) -- (3,1);
            \node at (1.5,0.5) {$(\rho_1,\gamma_1)$};
            \node at (2.5,1.5) {$(\rho_2,\gamma_2)$};
            \node at (-0.5,-2.5) {$(\rho_0,\gamma_0)$};
            \node at (5.5,2.5) {$(\rho_3,\gamma_3)$};
            \draw [line width=0.3mm] (0,0) -- (2,0);
            \draw [line width=0.3mm] (0,0) -- (0,2);
            \draw [line width=0.3mm] (1,0) -- (1,2);
            \draw [line width=0.3mm] (2,0) -- (2,2);
            \draw [line width=0.3mm] (3,1) -- (3,2);
            \draw [line width=0.3mm] (0,0) -- (2,0);
            \draw [line width=0.3mm] (0,1) -- (3,1);
            \draw [line width=0.3mm] (0,2) -- (3,2);
            \draw [color = gray!42] (3,1) -- (4.2,1);
            \draw [color = gray!42] (2,0) -- (4.2,0);
            \draw [color = gray!42] (1,-1) -- (4.2,-1);
            \draw [color = gray!42] (1,0) -- (1,-1.2);
            \draw [color = gray!42] (2,0) -- (2,-1.2);
            \draw [color = gray!42] (3,1) -- (3,-1.2);
            \draw [color = gray!42] (4,2) -- (4,-1.2);
            \draw[ ->] (1.5,-1.3) -- (0.7,-0.7);
            \draw[ ->] (3,-0.2) -- (2.7,0.3);
            \draw[ ->] (4.5,1) -- (3.7,1.3);
            \node at (1.5,-1.5) {$(\rho_1 + 1,\gamma_0 + 1)$};
            \node at (3,-0.5) {$(\rho_2 + 1,\gamma_1 + 1)$};
            \node at (4.5,0.8) {$(\rho_3 + 1,\gamma_2 + 1)$};
        \end{tikzpicture}
    \caption{The inside (red/darker) and outside (blue/lighter) corners of $\mu = (3,2)$. Here, $(\rho_0, \rho_1, \rho_2, \rho_3) = (\infty, 1,0,-1)$ and $(\gamma_0, \gamma_1, \gamma_2, \gamma_3) = (-1,1,2,\infty)$.}
    \label{fig:examplecorners}
\end{figure}

\subsection{Equivariant Edge Measure Preliminaries}\label{MNOPpreliminaries}

We now turn to a discussion of the quantities $Q(\lambda)$, $\overline{Q}(\lambda)$, $F(\lambda)$, and $\wmnop(\lambda)$ defined in Section~\ref{sec:intro_dt}.  Where unambiguous, we write $Q=Q(\lambda)$ and so forth. The following two lemmas concern a generating function which we call the ``corner polynomial'' and a useful property of the swap operation.

\begin{lemma}\label{cornerpolynomial}
    The ``corner polynomial'' for a partition $\lambda$ is
    \begin{align}
        C = C(\lambda) := Q(1-r)(1-s) &= 1 + \sum_{(i,j) \text{ (true) inside corner of } \lambda} r^{i+1}s^{j+1} - \sum_{(i,j) \text{ outside corner of } \lambda} r^{i}s^{j}  \notag \\
        &= 1 + \sum_{k=1}^m r^{\rho_k+1}s^{\gamma_k+1} - \sum_{k=1}^{m+1} r^{\rho_k+1}s^{\gamma_{k-1}+1}. 
    \end{align}
\end{lemma}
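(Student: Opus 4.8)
The plan is to compute $C = Q(1-r)(1-s)$ directly from the definition $Q = \sum_{(i,j)\in\lambda} r^i s^j$, by a telescoping/row-by-row argument. First I would expand $Q(1-r) = \sum_{(i,j)\in\lambda}(r^i s^j - r^{i+1}s^j)$. Fix a row $i$ with $0 \le i \le \mathrm{len}(\lambda)-1$; the cells in that row are $(i,0),(i,1),\dots,(i,\lambda_i-1)$, so the contribution of row $i$ to $Q(1-r)$ telescopes in the $s$-variable... wait, it telescopes in whichever variable indexes the columns. Let me instead first multiply by $(1-s)$: the row-$i$ contribution to $Q(1-s)$ is $\sum_{j=0}^{\lambda_i - 1}(r^i s^j - r^i s^{j+1}) = r^i(1 - s^{\lambda_i})$. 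Summing over rows, $Q(1-s) = \sum_{i=0}^{\mathrm{len}(\lambda)-1} r^i - \sum_{i=0}^{\mathrm{len}(\lambda)-1} r^i s^{\lambda_i}$. Now multiply by $(1-r)$: the first sum telescopes to $1 - r^{\mathrm{len}(\lambda)}$, while the second sum I would group by the value of $\lambda_i$, i.e. by the horizontal runs of the staircase boundary of $\lambda$.

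The key step is to identify the two remaining sums with corner data. The inside corners of $\lambda$ (the true ones, indexed $k=1,\dots,m$) are exactly the cells $(\rho_k,\gamma_k)$ where the boundary path turns; grouping $\sum_i r^i s^{\lambda_i}$ by constant runs, the run at height (row-value) $\lambda_i$ starts at some row and the ``last row'' of each run is an inside corner. A careful bookkeeping of where runs begin and end, together with the telescoping from the $(1-r)$ factor, should collapse everything to: a constant term $1$, a positive contribution $\sum_{k=1}^m r^{\rho_k+1}s^{\gamma_k+1}$ from the true inside corners (shifted by $(1,1)$ because of the two factors $(1-r)(1-s)$), and a negative contribution $-\sum_{k=1}^{m+1} r^{\rho_k+1}s^{\gamma_{k-1}+1}$ from the outside corners, using that the $k$-th outside corner has coordinates $(\rho_k+1,\gamma_{k-1}+1)$ as recorded just after \eqref{corners}. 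The two artificial inside corners $(\infty,-1)$ and $(-1,\infty)$ should be handled by noting they contribute formally-vanishing monomials (e.g. $r^\infty$), or equivalently by checking the edge cases $k=1$ and $k=m+1$ separately so that $r^{\mathrm{len}(\lambda)}$ and $s^{\lambda_1}$-type terms from the telescoping cancel correctly against outside-corner terms.

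An alternative, possibly cleaner, route is a bijective/inclusion–exclusion argument: $r^i s^j$ appears in $Q(1-r)(1-s) = Q - rQ - sQ + rsQ$ with coefficient equal to $[(i,j)\in\lambda] - [(i-1,j)\in\lambda] - [(i,j-1)\in\lambda] + [(i-1,j-1)\in\lambda]$. This coefficient is $+1$ iff $(i,j)\in\lambda$ but $(i-1,j),(i,j-1)\notin\lambda$ — i.e. $(i,j)=(0,0)$, giving the constant $1$ — plus the case where exactly the ``inner'' pattern fails; and it is $-1$ precisely at the outside corners. I would enumerate the finitely many sign patterns of the four indicator bits that are compatible with $\lambda$ being a Young diagram (staircase monotonicity rules out most), and read off that the nonzero coefficients are exactly: $+1$ at $(0,0)$, $+1$ at each $(\rho_k+1,\gamma_k+1)$ for true inside corners, and $-1$ at each outside corner $(\rho_k+1,\gamma_{k-1}+1)$. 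Matching $(0,0)$ together with the true-inside-corner terms into the single indexing $\sum_{k=1}^m r^{\rho_k+1}s^{\gamma_k+1}$ plus the leading $1$ recovers the stated formula.

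The main obstacle I anticipate is purely bookkeeping: correctly lining up the index ranges ($k$ from $1$ to $m$ versus $1$ to $m+1$), the coordinate shifts by $+1$ coming from the two factors, and the role of the artificial corners $(\rho_0,\gamma_0)=(\infty,-1)$ and $(\rho_{m+1},\gamma_{m+1})=(-1,\infty)$ — making sure the boundary terms of the telescoping sums are exactly absorbed by (or excluded from) the outside-corner sum, with no leftover $r^{\mathrm{len}(\lambda)}$ or $s^{\lambda_1}$ terms. The inclusion–exclusion phrasing largely sidesteps the telescoping boundary issues, so I would present that as the main proof and relegate the run-grouping computation to a remark if needed.
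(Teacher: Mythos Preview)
Your second (and preferred) approach via the inclusion--exclusion coefficient $[(i,j)\in\lambda] - [(i-1,j)\in\lambda] - [(i,j-1)\in\lambda] + [(i-1,j-1)\in\lambda]$ is exactly what the paper does: it expands $Q(1-r)(1-s) = Q - Qr - Qs + Qrs$ and then runs a cell-by-cell case analysis on the possible indicator patterns compatible with the Young-diagram shape. Your row-telescoping alternative is a genuinely different route, but since you elect to present the indicator-bit argument as the main proof, the proposal matches the paper's proof essentially verbatim.
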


\begin{proof}
    Observe that
    \begin{align}\label{eq:cornerexpansion}
        Q(1-r)(1-s) = Q - Qr - Qs + Qrs.
    \end{align}
    We consider the contribution of each cell in the plane to $C$. There are five cases.
    \begin{enumerate}
        \item[\textbf{Case 1.}] The top left cell in any diagram $\lambda$ is only represented in the first term on the RHS of \eqref{eq:cornerexpansion} meaning that this cell will contribute $r^0s^0 = 1$ to $C$.
        \item[\textbf{Case 2.}] The remainder of cells inside the diagram $\lambda$ have a contribution of zero to $C$.
        \begin{itemize}
            \item If the cell is in the top row of $\lambda$, then is has a cell in $\lambda$ to its left, but not above it nor northwest of it. Hence it appears once in the first term and once in the third term of the RHS of \eqref{eq:cornerexpansion} which cancel making its total contribution 0.
            \item If the cell is in the far left column of $\lambda$, we can make an analogous argument.
            \item If the cell is otherwise in $\lambda$, it appears once in each of the four terms of the RHS of \eqref{eq:cornerexpansion} yielding a total contribution of 0.
        \end{itemize}
        \item[\textbf{Case 3.}] Note that cells $(i,j)$ which are outside corners of $\lambda$ either 
        \begin{itemize}
            \item have cells of $\lambda$ to the left, above, and northwest of themselves, hence appearing once in each of the last three terms of the RHS of \eqref{eq:cornerexpansion},
            \item have a cell of $\lambda$ to the left of themselves, hence appearing in the $-Qs$ term of \eqref{eq:cornerexpansion} only, or
            \item have a cell of $\lambda$ above themselves, hence appearing in the $-Qr$ term of \eqref{eq:cornerexpansion} only.
        \end{itemize} 
        In any case,  each outside corner of $\lambda$ contributes a total of $-r^is^j$ to $C$.
        \item[\textbf{Case 4.}] Given a (true, not artificial) inside corner $(i,j)$ in $\lambda$, the cell $(i+1,j+1)$ contributes $r^{i+1}s^{j+1}$ to $C$ since it appears once in the last term of the RHS of \eqref{eq:cornerexpansion} but not in any other terms. 
        \item[\textbf{Case 5.}] All as yet unmentioned cells are outside of $\lambda$ and do not have cells of $\lambda$ directly to the right, above, or northwest of themselves, hence have no effect on $C$.
    \end{enumerate}
\end{proof}

Define $\overline{C}$ similarly:
\begin{align}
    \overline{C} := \overline{Q}(1-r^{-1})(1-s^{-1}) = \frac{\overline{Q}(1-r)(1-s)}{rs}. \notag
\end{align}

\begin{figure}
    \centering
        \begin{tikzpicture}[scale=1]
            \draw [line width=0.2mm,fill=green!30, ] (0,0) -- (0,1) -- (1,1) -- (1,0);
            \draw [line width=0.2mm,fill=green!30, ] (1,0) -- (1,1) -- (2,1) -- (2,0);
            \draw [line width=0.2mm,fill=green!30, ] (0,1) -- (0,2) -- (1,2) -- (1,1);
            \draw [line width=0.2mm,fill=green!30, ] (1,1) -- (1,2) -- (2,2) -- (2,1);
            \draw [line width=0.2mm,fill=green!30, ] (2,1) -- (2,2) -- (3,2) -- (3,1);
            \draw (0,0) -- (0,2);
            \draw (1,0) -- (1,2);
            \draw (2,0) -- (2,2);
            \draw (3,1) -- (3,2);
            \draw (0,0) -- (2,0);
            \draw (0,1) -- (3,1);
            \draw (0,2) -- (3,2);
            \node at (0.5,1.5) {$1$};
            \node at (3.5,1.5) {$-1$};
            \node at (2.5,0.5) {$-1$};
            \node at (0.5,-0.5) {$-1$};
            \node at (2.5,-0.5) {$1$};
            \node at (3.5,0.5) {$1$};
            \draw[color = gray!42] (0,-2.2) -- (0,0);
            \draw[color = gray!42] (1,-2.2) -- (1,0);
            \draw[color = gray!42] (2,-2.2) -- (2,0);
            \draw[color = gray!42] (3,-2.2) -- (3,1);
            \draw[color = gray!42] (4,-2.2) -- (4,2);
            \draw[color = gray!42] (5,-2.2) -- (5,2);
            \draw[color = gray!42] (3,2) -- (5.2,2);
            \draw[color = gray!42] (3,1) -- (5.2,1);
            \draw[color = gray!42] (2,0) -- (5.2,0);
            \draw[color = gray!42] (0,-1) -- (5.2,-1);
            \draw[color = gray!42] (0,-2) -- (5.2,-2);
            \node at (-0.5,1) {$\lambda$};
        \end{tikzpicture}
    \caption{Inside every cell in $\lambda = (3,2)$ is the coefficient of its contribution to $C$. Empty cells contribute nothing to $C$. For example, the cell $(1,2)$ contributes $-1 \cdot r^1s^2$ to $C$.}
    \label{fig:examplecornerpolycontributions}
\end{figure}

\begin{lemma} \label{quotientofswapsisswapofdifference}
    For Laurent polynomials $F$ and $G$ in the variables $r$ and $s$ with no constant terms, the quotient of swaps of $F$ and $G$ is the swap of the difference of $F$ and $G$. In other words,
    \begin{align}
        \text{swap}(F - G) = \frac{\text{swap}(F)}{\text{swap}(G)}.
    \end{align}
\end{lemma}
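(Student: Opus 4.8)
The plan is to observe that the lemma is precisely the statement that $\swap$ is a homomorphism from the additive group of Laurent polynomials in $r,s$ with no constant term to the multiplicative group of nonzero elements of $\Q(u,v)$; the asserted identity is then the instance $\swap(F-G) = \swap(F)\,\swap(G)^{-1}$, and the proof is a direct unwinding of the definition \eqref{eqn:swaperation}.

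First I would put $F$ and $G$ over a common index set. Write $F = \sum_{(i,j)\in A} c_{i,j}\,r^i s^j$ and $G = \sum_{(i,j)\in A} d_{i,j}\,r^i s^j$, where $A\subseteq\Z^2$ is a finite set containing the supports of both $F$ and $G$ (so some $c_{i,j}$ or $d_{i,j}$ may be zero). Since neither $F$ nor $G$ has a constant term we may take $(0,0)\notin A$, which is the point of the hypothesis: it guarantees that the factor $(iu-jv)$ is never $0\cdot u - 0\cdot v = 0$, so each $(iu-jv)^{c}$ appearing below is a well-defined nonzero element of $\Q(u,v)$ (treating $u,v$ as formal indeterminates, $iu-jv$ vanishes only when $i=j=0$). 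I would also note explicitly that enlarging a polynomial's index set to $A$ only inserts factors $(iu-jv)^0 = 1$, so it leaves the value of $\swap$ unchanged.

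The computation is then immediate. The difference $F - G = \sum_{(i,j)\in A}(c_{i,j}-d_{i,j})\,r^i s^j$ again has no constant term, so
\begin{align}
    \swap(F-G) = \prod_{(i,j)\in A}(iu-jv)^{c_{i,j}-d_{i,j}} = \frac{\prod_{(i,j)\in A}(iu-jv)^{c_{i,j}}}{\prod_{(i,j)\in A}(iu-jv)^{d_{i,j}}} = \frac{\swap(F)}{\swap(G)},
\end{align}
using the observation about index sets in the final equality.

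I do not expect a genuine obstacle here: the content is entirely bookkeeping. The only points requiring a word of care are (i) the no-constant-term hypothesis, which is exactly what prevents an ill-defined $0^0$ and lets the quotient make sense, and (ii) the fact that $\swap$ lands in the field $\Q(u,v)$, so that forming $\swap(G)^{-1}$ and dividing is legitimate; both are handled by the setup in the previous paragraph.
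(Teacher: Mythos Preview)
Your proof is correct and follows essentially the same approach as the paper's: write $F$ and $G$ in coefficient form, compute $F-G$, and observe that the exponents subtract under $\swap$, giving the quotient. You add a bit more explicit bookkeeping (common index set, the role of the no-constant-term hypothesis in avoiding $0^0$), but the argument is the same direct unwinding of the definition.
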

\begin{proof}
    Suppose
    \begin{align}
        F &= \sum_{i,j} c_{i,j} r^i s^j, &
        G &= \sum_{i,j} d_{i,j} r^i s^j. \notag
    \end{align}
    Applying the swap operation yields
    \begin{align}
        \text{swap}(F) &= \prod_{i,j} (iu - jv)^{c_{i,j}}, \notag&
        \text{swap}(G) &= \prod_{i,j} (iu - jv)^{d_{i,j}}. \notag
    \end{align}
    Note also that
    \begin{align}
        F - G = \sum_{i,j} (c_{i,j} - d_{i,j}) r^i s^j \notag
    &&\text{and}&&
        \text{swap}(F - G) &= \prod_{i,j} (iu - jv)^{c_{i,j} - d_{i,j}}. \notag
    \end{align}
    Taking the quotient of $\text{swap}(F)$ and $\text{swap}(G)$, we have
    \begin{align}
        \frac{\text{swap}(F)}{\text{swap}(G)} = \frac{\prod_{i,j} (iu - jv)^{c_{i,j}}}{\prod_{i,j} (iu - jv)^{d_{i,j}}} = \prod_{i,j} (iu - jv)^{c_{i,j} - d_{i,j}} = \text{swap}(F - G). \notag
    \end{align}
\end{proof}

\section{Main Results}\label{mainresults}

In this section, we compute both the ratio of Jack Plancherel measures and the ratio of equivariant edge measures of two partitions that differ by exactly one corner. Once this is accomplished, we compare these ratios. Then Theorem \ref{mainresult} follows by induction on Young's lattice.

Given a partition $\lambda$ and a distinguished inside corner of $\lambda$, $(\rho_\ell, \gamma_\ell)$, define a new partition $\mu$ by removing $(\rho_\ell, \gamma_\ell)$ from $\lambda$. Note that $\mu\subseteq\lambda$ and $|\lambda| = |\mu| + 1$. Figure \ref{fig:examplegrowingbyonebox} shows an example of such a pair of Young diagrams.

\begin{figure}[ht]
    \centering
    \begin{subfigure}[b]{0.45\textwidth}
        \centering
        \begin{tikzpicture}[scale=1.3]
           \draw (0,-1) -- (0,2);
           \draw (1,-1) -- (1,2);
           \draw (2,0) -- (2,2);
           \draw (3,1) -- (3,2);
           \draw (0,-1) -- (1,-1);
           \draw (0,0) -- (2,0);
           \draw (0,1) -- (3,1);
           \draw (0,2) -- (3,2);
           \node at (1.5,0.5) {$(\rho_\ell, \gamma_\ell)$};
           %\node at (2.5, -0.5) {\Large$\lambda$};
        \end{tikzpicture}
        \caption{$\lambda = (3,2,1)$}
        \label{fig:examplegrowbyasquarelambda}   
    \end{subfigure}
    %\hspace{1in}
    \begin{subfigure}[b]{0.45\textwidth}
        \centering
        \begin{tikzpicture}[scale=1.3]
            \draw (0,-1) -- (0,2);
            \draw (1,-1) -- (1,2);
            \draw (2,1) -- (2,2);
            \draw (3,1) -- (3,2);
            \draw (0,-1) -- (1,-1);
            \draw (0,0) -- (1,0);
            \draw (0,1) -- (3,1);
            \draw (0,2) -- (3,2);
            %\node at (2.5, -0.5) {\Large$\mu$};
        \end{tikzpicture}
        \caption{$\mu = (3,1,1)$}
        \label{fig:examplegrowbyasquaremu}   
    \end{subfigure}
    \caption{The partition $\mu$ is obtained by removing the corner $(\rho_\ell, \gamma_\ell)$ from $\lambda$.}
    \label{fig:examplegrowingbyonebox}
\end{figure}
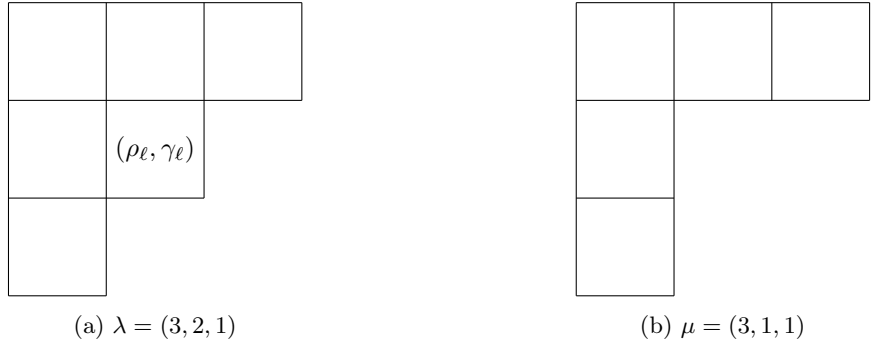

\subsection{Ratio of Jack Plancherel Measures}

Recall that we defined the Jack Plancherel measure as
\begin{align}
    \wjack(\lambda) = \frac{1}{\prod_{\square\in\lambda}h^*_\lambda(\square)h_*^\lambda(\square)} \notag
\end{align}
in Section \ref{introductionmotivation}. We now compute the ratio of Jack Plancherel measures of two partitions $\lambda$ and $\mu$ related as described above.

\begin{proposition} \label{JPWR}
    Let $\mu$ and $\lambda$ be partitions such that $|\mu| + 1 = |\lambda|$ and $\mu\subseteq\lambda$ and let $m$ be the number of non-artificial inside corners of $\lambda$. Then
    \begin{align}
        \frac{\wjack(\lambda)}{\wjack(\mu)} = \prod_{k=1}^{m} A(k) \notag
    \end{align}
    where 
    \begin{align}\label{eq:JPWR}
        A(k) = 
        \begin{cases}
            \frac{((\rho_\ell - \rho_k - 1)u - (\gamma_\ell-\gamma_k - 1)v)  \cdot ((\rho_\ell - \rho_k)u - (\gamma_\ell-\gamma_k)v)} {((\rho_\ell - \rho_k - 1)u - (\gamma_\ell-\gamma_{k-1} - 1)v) \cdot ((\rho_\ell - \rho_k)u - (\gamma_\ell-\gamma_{k-1})v)} & \text{if } k < \ell \\
            \\
            \frac{((\rho_\ell - \rho_k)u - (\gamma_\ell-\gamma_k)v) \cdot ((\rho_\ell - \rho_k - 1)u - (\gamma_\ell-\gamma_k - 1)v)} {((\rho_\ell - \rho_{k+1})u - (\gamma_\ell - \gamma_k)v) \cdot ((\rho_\ell - \rho_{k+1} - 1)u - (\gamma_\ell-\gamma_k - 1)v)} & \text{if } k > \ell \\
            \\
            \frac{1}{(\rho_\ell - \rho_{\ell+1})u \cdot ((\rho_\ell - \rho_{\ell+1} - 1)u + v)} \cdot \frac{uv}{(u + (\gamma_{\ell} - \gamma_{\ell-1} - 1)v) \cdot ((\gamma_{\ell} - \gamma_{\ell-1})v)} & \text{if } k = \ell.
        \end{cases}
    \end{align}
\end{proposition}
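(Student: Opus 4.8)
The plan is to compute $\wjack(\lambda)/\wjack(\mu)$ directly from the definition by tracking exactly which cells' upper and lower hook lengths change when the corner $(\rho_\ell,\gamma_\ell)$ is removed. Since $\wjack(\lambda) = 1/\prod_{\square\in\lambda} h^*_\lambda(\square)h_*^\lambda(\square)$, the ratio is
\begin{align}
\frac{\wjack(\lambda)}{\wjack(\mu)} = \frac{\prod_{\square\in\mu} h^*_\mu(\square)h_*^\mu(\square)}{\prod_{\square\in\lambda} h^*_\lambda(\square)h_*^\lambda(\square)}. \notag
\end{align}
First I would observe that the only cells whose arm or leg length changes between $\lambda$ and $\mu$ are: the removed cell $(\rho_\ell,\gamma_\ell)$ itself (which contributes to the numerator of the big ratio only implicitly, i.e. its hook data appears in the $\lambda$-product but there is no corresponding cell in $\mu$), the cells strictly in row $\rho_\ell$ and strictly to the left of column $\gamma_\ell$, and the cells strictly in column $\gamma_\ell$ and strictly above row $\rho_\ell$. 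Every other cell has the same arm, leg, and hence the same $h^*$ and $h_*$ in both diagrams, so it cancels. This reduces the problem to a product over the ``hook'' of the removed corner.

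Next I would parametrize that hook using the inside-corner data $(\rho_k,\gamma_k)$. For a cell $(\rho_\ell, c)$ in the arm (with $\gamma_{k-1} < c \le \gamma_{k}$ for some $k < \ell$ in the appropriate indexing convention), removing the corner decreases its arm length by exactly $1$ while leaving its leg length fixed; dually for a cell $(r,\gamma_\ell)$ in the leg, the leg length drops by $1$ and the arm is fixed. Using $h^*_\lambda(\square) = u\,\ell(\square) + v(a(\square)+1)$ and $h_*^\lambda(\square) = u(1+\ell(\square)) + v\,a(\square)$, the ratio of $h^*_\mu h_*^\mu$ to $h^*_\lambda h_*^\lambda$ for an arm cell telescopes, because consecutive arm cells within one ``block'' between two inside corners have arm lengths differing by $1$, so the product over a block collapses to the boundary values expressed in terms of $\rho_k,\gamma_k,\rho_\ell,\gamma_\ell$. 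I would do the $k<\ell$ case (arm side), the $k>\ell$ case (leg side), and the $k=\ell$ case (the removed cell together with the two cells immediately adjacent, which is where the lone factors $u$, $(\cdots)u+v$, $uv$, etc. come from) separately, matching each to the stated formula for $A(k)$.

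The main obstacle I anticipate is purely bookkeeping: getting the index shifts in the arm/leg parametrization exactly right so that the telescoping product over each block between inside corners $k-1$ and $k$ yields precisely the four-factor expression in \eqref{eq:JPWR}, with the correct $\gamma_{k-1}$ versus $\gamma_k$ (resp. $\rho_k$ versus $\rho_{k+1}$) in numerator and denominator, and handling the boundary/artificial corners and the $k=\ell$ term without sign or off-by-one errors. It may be cleaner to first write $h^*$ and $h_*$ of a cell directly in terms of the coordinates of the inside corners bounding its arm and leg (this is a standard rewriting: the arm and leg of $(i,j)$ are determined by which inside corner lies at the end of its row-hook and column-hook), then the whole computation becomes a finite product that visibly telescopes. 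I expect no conceptual difficulty beyond this, so the proof will be a careful but routine calculation once the parametrization is fixed.
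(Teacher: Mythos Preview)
Your proposal is correct and matches the paper's proof essentially step for step: the paper also reduces to the product over $\row((\rho_\ell,\gamma_\ell))\cup\col((\rho_\ell,\gamma_\ell))$, splits this into blocks indexed by the inside corners, telescopes each block, and treats the $k=\ell$ contribution (together with the leftover $1/(uv)$ from the removed cell) separately. The only refinement in the paper you might anticipate is a short case split according to whether the two blocks immediately adjacent to $(\rho_\ell,\gamma_\ell)$ are empty, but this is exactly the boundary bookkeeping you already flagged.
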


\begin{proof}
    Label the corners of the partition $\lambda$ as in \eqref{corners}. We have 
    \begin{align}\label{eq:firststepjackratioproof}
        \frac{\wjack(\lambda)}{\wjack(\mu)} = \frac{\prod_{\square\in\mu} h^*_\mu(\square)h_*^\mu(\square)}{\prod_{\square\in\lambda} h^*_\lambda(\square)h_*^\lambda(\square)} = \frac{1}{uv} \cdot \prod_{\square\in\mu} \frac{h^*_\mu(\square)h_*^\mu(\square)}{h^*_\lambda(\square)h_*^\lambda(\square)}.
    \end{align}
    Since $\lambda$ and $\mu$ differ by exactly one cell, namely $(\rho_\ell, \gamma_\ell)\in\lambda$, we note that $h^*_\lambda(\square) = h^*_\mu(\square)$ for all cells in $\mu$ except for those in the row or column of $(\rho_\ell,\gamma_\ell)$. The same can be said for the lower hook lengths. Hence significant cancellation occurs in \eqref{eq:firststepjackratioproof}.
    
    Call the set of cells that share a row with $(\rho_\ell,\gamma_\ell)$ (i.e. directly to the left of $(\rho_\ell,\gamma_\ell)$) $\row((\rho_\ell,\gamma_\ell))$ and the set of cells sharing a column with $(\rho_\ell,\gamma_\ell)$ (i.e. directly above $(\rho_\ell,\gamma_\ell)$) $\col((\rho_\ell,\gamma_\ell))$.
    
    \begin{figure}[ht] 
        \begin{center}
            \begin{tikzpicture}[scale=1.5]
                \draw (0,-1) -- (0,4);
                \draw (1,-1) -- (1,4);
                \draw (2,0) -- (2,4);
                \draw (3,1) -- (3,4);
                \draw (-1,3) -- (3,3);
                \draw (-1,4) -- (3,4);
                \draw (-1,-1) -- (1,-1);
                \draw (-1,0) -- (2,0);
                \draw (-1,1) -- (3,1);
                \draw (-1,2) -- (3,2);
                \draw (-1,-1) -- (-1,4);
                \node at (2.5, -0.5) {\Large$\lambda$};
                \draw [line width=0mm,fill=red!42, ] (1,4) -- (1,1) -- (2,1) -- (2,4);
                \node [rotate = 270] at (1.5,2.5) {$\col((\rho_\ell,\gamma_\ell))$};
                \node at (1.5,0.5) {$(\rho_\ell, \gamma_\ell)$};
                \draw [line width=0mm,fill=cyan!24, ] (-1,1) -- (-1,0) -- (1,0) -- (1,1);
                 \node at (0,0.5) {$\row((\rho_\ell,\gamma_\ell))$};
                 \node at (-1.5,1.5) {$\cdots$};
                 \node at (1,4.5) {$\vdots$};
            \end{tikzpicture}
            \caption{$\row((\rho_\ell,\gamma_\ell))$ and $\col((\rho_\ell,\gamma_\ell))$.}
            \label{fig:examplerowandcol}
        \end{center}
    \end{figure}
    
    Now 
    \begin{align}
        \frac{\wjack(\lambda)}{\wjack(\mu)} = \frac{1}{uv} \cdot \prod_{\square\in\row((\rho_\ell,\gamma_\ell))} \frac{h^*_\mu(\square)h_*^\mu(\square)}{h^*_\lambda(\square)h_*^\lambda(\square)} \cdot \prod_{\square\in\col((\rho_\ell,\gamma_\ell))} \frac{h^*_\mu(\square)h_*^\mu(\square)}{h^*_\lambda(\square)h_*^\lambda(\square)}. \notag
    \end{align}
    We split these products to view the expression as a product over corners of $\lambda$.
    \begin{align}\label{eq:JPWRproofstep}
        \frac{\wjack(\lambda)}{\wjack(\mu)} = \frac{1}{uv} \cdot \prod_{k = 1}^{\ell - 1} \left( \prod_{i = \gamma_{k-1} + 1}^{\gamma_k} \frac{h^*_\mu((\rho_\ell, i))h_*^\mu((\rho_\ell, i))}{h^*_\lambda((\rho_\ell, i))h_*^\lambda((\rho_\ell, i))} \right)
        \cdot \prod_{k = \ell + 1}^{m} \left( \prod^{\rho_{k}}_{i = \rho_{k+1} + 1} \frac{h^*_\mu((i, \gamma_\ell))h_*^\mu((i, \gamma_\ell))}{h^*_\lambda((i, \gamma_\ell))h_*^\lambda((i, \gamma_\ell))} \right) \notag\\
        \cdot \prod_{i = \gamma_{\ell - 1} + 1}^{\gamma_\ell - 1} \frac{h^*_\mu((\rho_\ell, i))h_*^\mu((\rho_\ell, i))}{h^*_\lambda((\rho_\ell, i))h_*^\lambda((\rho_\ell, i))}
        \cdot \prod^{\rho_\ell - 1}_{i = \rho_{\ell + 1} + 1} \frac{h^*_\mu((i, \gamma_\ell))h_*^\mu((i, \gamma_\ell))}{h^*_\lambda((i, \gamma_\ell))h_*^\lambda((i, \gamma_\ell))}
    \end{align}
    How we proceed depends on if the last two products of \eqref{eq:JPWRproofstep} are empty or nonempty. We complete this argument via cases.
    \begin{enumerate}
        \item[\textbf{Case 1.}] If the final two products are nonempty each inside product in the first line and both of the last two products telescope, leaving
        \begin{align}
            \frac{\wjack(\lambda)}{\wjack(\mu)} = \frac{1}{uv} \cdot \prod_{k = 1}^{\ell - 1} \frac{h^*_\mu((\rho_\ell, \gamma_k))h_*^\mu((\rho_\ell, \gamma_k))}{h^*_\lambda((\rho_\ell, \gamma_{k-1}+1))h_*^\lambda((\rho_\ell, \gamma_{k-1}+1))}
            \cdot \prod_{k = \ell + 1}^{m} \frac{h^*_\mu(\rho_k,\gamma_\ell)h_*^\mu(\rho_k,\gamma_\ell)}{h^*_\lambda(\rho_{k+1}+1,\gamma_\ell)h_*^\lambda(\rho_{k+1}+1,\gamma_\ell)}  \notag \\
            \cdot \frac{h^*_\mu((\rho_\ell, \gamma_\ell - 1))h_*^\mu((\rho_\ell, \gamma_\ell - 1))}{h^*_\lambda((\rho_\ell, \gamma_{\ell - 1} + 1))h_*^\lambda((\rho_\ell, \gamma_{\ell - 1} + 1))}
            \cdot \frac{h^*_\mu((\rho_\ell - 1, \gamma_\ell))h_*^\mu((\rho_\ell - 1, \gamma_\ell))}{h^*_\lambda((\rho_{\ell + 1} + 1, \gamma_\ell))h_*^\lambda((\rho_{\ell + 1} + 1, \gamma_\ell))}.  \notag
        \end{align}
        Rewriting without the use of hook lengths, we have
        \begin{align}
            \frac{\wjack(\lambda)}{\wjack(\mu)} = \prod_{k = 1}^{\ell - 1} \frac{((\rho_k - \rho_\ell)u + (\gamma_\ell - \gamma_k)v) ((\rho_k - \rho_\ell + 1)u + (\gamma_\ell - \gamma_k - 1)v)}{ ((\rho_k - \rho_\ell)u + (\gamma_\ell - \gamma_{k-1})v) ((\rho_k - \rho_\ell + 1)u + (\gamma_\ell - \gamma_{k-1} - 1)v)}   \notag \\
            \cdot \prod_{k = \ell + 1}^{m} \frac{((\rho_\ell - \rho_k - 1)u+(\gamma_k - \gamma_\ell + 1)v) ((\rho_\ell - \rho_k)u +(\gamma_k - \gamma_\ell)v)}{((\rho_\ell - \rho_{k+1} - 1)u + (\gamma_k - \gamma_\ell + 1)v) ((\rho_\ell - \rho_{k+1})u + (\gamma_k - \gamma_\ell)v)}  \notag \\
            \cdot \frac{1}{uv} \cdot \frac{uv}{((\gamma_\ell - \gamma_{\ell - 1})v) (u + (\gamma_\ell - \gamma_{\ell - 1} - 1)v)} \cdot \frac{uv}{((\rho_\ell - \rho_{\ell+1} - 1)u + v) ((\rho_\ell - \rho_{\ell+1})u)}.  \notag
        \end{align}
        Simplification gives the desired result.

        \item[\textbf{Case 2.}] If the last two products are empty we can omit them. Figure \ref{fig:examplerowandcol} depicts a situation in which this occurs. Note that in this case, $\rho_{\ell+1} = \rho_\ell - 1$ and $\gamma_{\ell-1} = \gamma_\ell - 1$. With this in mind, the $k=\ell$ case of the ratio of Jack Plancherel measures \eqref{eq:JPWR} reduces to $\frac{1}{uv}$.
        
        Picking up where we left off, as in Case 1, each inside product in \eqref{eq:JPWRproofstep} telescopes. We are left with
        \begin{align}
            \frac{\wjack(\lambda)}{\wjack(\mu)} = \frac{1}{uv} \cdot \prod_{k = 1}^{\ell - 1} \frac{h^*_\mu((\rho_\ell, \gamma_k))h_*^\mu((\rho_\ell, \gamma_k))}{h^*_\lambda((\rho_\ell, \gamma_{k-1}+1))h_*^\lambda((\rho_\ell, \gamma_{k-1}+1))} 
            \cdot \prod_{k = \ell + 1}^{m} \frac{h^*_\mu(\rho_k,\gamma_\ell)h_*^\mu(\rho_k,\gamma_\ell)}{h^*_\lambda(\rho_{k+1}+1,\gamma_\ell)h_*^\lambda(\rho_{k+1}+1,\gamma_\ell)}.  \notag 
        \end{align}
        Rewriting without the use of hook lengths, we have
        \begin{align}
            \frac{\wjack(\lambda)}{\wjack(\mu)} &= \frac{1}{uv} \cdot \prod_{k = 1}^{\ell - 1} \frac{((\rho_k - \rho_\ell)u + (\gamma_\ell - \gamma_k)v) ((\rho_k - \rho_\ell + 1)u + (\gamma_\ell - \gamma_k - 1)v)}{ ((\rho_k - \rho_\ell)u + (\gamma_\ell - \gamma_{k-1})v) ((\rho_k - \rho_\ell + 1)u + (\gamma_\ell - \gamma_{k-1} - 1)v)}  \notag \\
            &\hspace{0.5in}\cdot \prod_{k = \ell + 1}^{m} \frac{((\rho_\ell - \rho_k - 1)u+(\gamma_k - \gamma_\ell + 1)v) ((\rho_\ell - \rho_k)u +(\gamma_k - \gamma_\ell)v)}{((\rho_\ell - \rho_{k+1} - 1)u + (\gamma_k - \gamma_\ell + 1)v) ((\rho_\ell - \rho_{k+1})u + (\gamma_k - \gamma_\ell)v)}.  \notag
        \end{align} 
        As a product over corners of $\lambda$ we have
        \begin{align}
            \frac{\wjack(\lambda)}{\wjack(\mu)} = \prod_{k=1}^{\# \text{ of corners of } \lambda} A(k)  \notag
        \end{align}
        where 
        \begin{align}
            A(k) = 
            \begin{cases}
                \frac{((\rho_\ell - \rho_k - 1)u - (\gamma_\ell-\gamma_k - 1)v)  \cdot ((\rho_\ell - \rho_k)u - (\gamma_\ell-\gamma_k)v)} {((\rho_\ell - \rho_k - 1)u - (\gamma_\ell-\gamma_{k-1} - 1)v) \cdot ((\rho_\ell - \rho_k)u - (\gamma_\ell-\gamma_{k-1})v)} & \text{if } k < \ell \\
                \\
                \frac{((\rho_\ell - \rho_k)u - (\gamma_\ell-\gamma_k)v) \cdot ((\rho_\ell - \rho_k - 1)u - (\gamma_\ell-\gamma_k - 1)v)} {((\rho_\ell - \rho_{k+1})u - (\gamma_\ell - \gamma_k)v) \cdot ((\rho_\ell - \rho_{k+1} - 1)u - (\gamma_\ell-\gamma_k - 1)v)} & \text{if } k > \ell \\
                \\
                \frac{1}{uv} & \text{if } k = \ell.  \notag
            \end{cases}
        \end{align}
        as claimed.
    \end{enumerate}  
\end{proof}

\subsection{Ratio of Equivariant Edge Measures}

We now compute the ratio of the equivariant edge measures of two partitions that differ by a single box. 

\begin{proposition} \label{MNOPWR}
    Let $\mu$ and $\lambda$ be partitions such that $|\mu| + 1 = |\lambda|$ and $\mu\subseteq\lambda$. Then
    \begin{align}
        \frac{\wmnop(\lambda)}{\wmnop(\mu)} = \prod_{k=1}^m B(k)  \notag
    \end{align}
    where
    \begin{align}
        B(k) = 
        \begin{cases}
            \frac{((\rho_\ell-\rho_k)u - (\gamma_\ell-\gamma_k)v) ((\rho_\ell - \rho_k - 1)u - (\gamma_\ell - \gamma_k - 1)v)} {((\rho_\ell-\rho_k)u - (\gamma_\ell -\gamma_{k-1})v) ((\rho_\ell - \rho_{k+1} - 1)u - (\gamma_\ell - \gamma_k - 1)v)} & \text{ if } k<\ell \\
            \\
            \frac{((\rho_\ell-\rho_k)u - (\gamma_\ell-\gamma_k)v) ((\rho_\ell - \rho_k - 1)u - (\gamma_\ell - \gamma_k - 1)v)} {((\rho_\ell-\rho_k)u - (\gamma_\ell -\gamma_{k-1})v) ((\rho_\ell - \rho_{k+1} - 1)u - (\gamma_\ell - \gamma_k - 1)v)} & \text{ if } k>\ell \\
            \\
            \frac{-uv}{(\gamma_{\ell} - \gamma_{\ell-1})v \cdot ((\rho_\ell - \rho_{\ell+1} - 1)u + v)} \cdot \frac{1}{((\rho_\ell)u - (\gamma_\ell - \gamma_m)v)((\rho_\ell - \rho_1 - 1)u - (\gamma_\ell - 1)v)} & \text{ if } k=\ell.
        \end{cases}
    \end{align}
\end{proposition}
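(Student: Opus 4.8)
The strategy parallels that of Proposition~\ref{JPWR}. First, by Lemma~\ref{quotientofswapsisswapofdifference},
$$\frac{\wmnop(\lambda)}{\wmnop(\mu)} \;=\; \frac{\swap(F(\lambda))}{\swap(F(\mu))} \;=\; \swap\!\big(F(\lambda)-F(\mu)\big),$$
so the whole problem reduces to computing the Laurent polynomial $F(\lambda)-F(\mu)$ and then applying $\swap$.

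To compute $F(\lambda)-F(\mu)$, I would use that $\lambda$ is obtained from $\mu$ by adjoining the single cell $(\rho_\ell,\gamma_\ell)$: with $P:=r^{\rho_\ell}s^{\gamma_\ell}$ (so that $P^{-1}=r^{-\rho_\ell}s^{-\gamma_\ell}$ and $PP^{-1}=1$) we have $Q(\lambda)=Q(\mu)+P$ and $\overline Q(\lambda)=\overline Q(\mu)+P^{-1}$, hence $Q(\lambda)\overline Q(\lambda)-Q(\mu)\overline Q(\mu)=Q(\mu)P^{-1}+P\overline Q(\mu)+1$. Substituting into the definition of $F$, using $Q(\cdot)(1-r)(1-s)=C(\cdot)$ and $\overline Q(\cdot)(1-r^{-1})(1-s^{-1})=\overline C(\cdot)$, and then rewriting $C(\mu),\overline C(\mu)$ in terms of $C(\lambda),\overline C(\lambda)$ via $C(\mu)=C(\lambda)-P(1-r)(1-s)$ and its barred analogue, the many resulting terms cancel in pairs and one is left with the compact identity
$$F(\lambda)-F(\mu)\;=\;\frac{\big(C(\lambda)-1\big)P^{-1}}{rs}\;+\;P\big(\overline C(\lambda)-1\big)\;-\;\frac{(1-r)(1-s)}{rs}.$$
In particular $F(\lambda)-F(\mu)$ is visibly a Laurent polynomial, and one checks directly from this formula that it has no constant term (as $\swap$ requires). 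Now I would insert Lemma~\ref{cornerpolynomial}, which gives $C(\lambda)-1=\sum_{k=1}^m r^{\rho_k+1}s^{\gamma_k+1}-\sum_{k=1}^{m+1}r^{\rho_k+1}s^{\gamma_{k-1}+1}$, together with the expansion of $\overline C(\lambda)-1$ obtained from it by $r\mapsto r^{-1},\ s\mapsto s^{-1}$. After multiplying by $P^{-1}/rs=r^{-\rho_\ell-1}s^{-\gamma_\ell-1}$ and by $P$ respectively, this displays $F(\lambda)-F(\mu)$ as an explicit integer combination of monomials $r^as^b$ whose exponents are differences of corner coordinates, together with the four low-degree monomials from $-\tfrac{(1-r)(1-s)}{rs}=-r^{-1}s^{-1}+r^{-1}+s^{-1}-1$.

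The last step is essentially to verify a Laurent-polynomial identity: regroup the monomials of $F(\lambda)-F(\mu)$ as $\sum_{k=1}^m\Phi_k$, where each $\Phi_k$ is constant-term-free and $\swap(\Phi_k)=B(k)$; then, since $\swap$ sends sums to products (an iterate of Lemma~\ref{quotientofswapsisswapofdifference}), $\swap(F(\lambda)-F(\mu))=\prod_k\swap(\Phi_k)=\prod_k B(k)$. For a corner $k\neq\ell$ this is routine: one puts into $\Phi_k$ the two coefficient-$(+1)$ monomials coming from the ``true inside corner'' terms of $C(\lambda)-1$ and $\overline C(\lambda)-1$ (these carry the numerator factors of $B(k)$) and the two coefficient-$(-1)$ monomials coming from the adjacent ``outside corner'' terms, which attach to corner $k$ through its neighbours $k-1$ and $k+1$ (these carry the denominator factors); the sign discrepancies between $\swap$ of a monomial and the way the corresponding linear factor is written in $B(k)$ — e.g.\ $\swap(r^{-a}s^{-b})=-(au-bv)$ for $a,b>0$ — occur an even number of times in this packet and so cancel, leaving exactly $B(k)$. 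The delicate case is $k=\ell$: the low-degree monomials $1,\ r^{\pm1},\ s^{\pm1},\ r^{-1}s^{-1}$ contributed by different packets and by the $-\tfrac{(1-r)(1-s)}{rs}$ term collide with one another, and which of them survive depends on whether $\rho_{\ell+1}=\rho_\ell-1$ and/or $\gamma_{\ell-1}=\gamma_\ell-1$ — the same dichotomy that appears in the proof of Proposition~\ref{JPWR}. The surviving monomials, now with an \emph{odd} total sign contribution, assemble into the six-factor expression $B(\ell)$, the leading minus sign included. The main obstacle is exactly this last bookkeeping: choosing the packets so that each is genuinely constant-term-free, tracking the evaluation signs, and resolving the $k=\ell$ collisions case by case; everything preceding it is formal.
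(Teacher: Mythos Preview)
Your proposal is correct and follows essentially the same route as the paper: reduce via Lemma~\ref{quotientofswapsisswapofdifference}, express $F(\lambda)-F(\mu)$ in terms of the corner polynomials $C(\lambda)$ and $\overline C(\lambda)$ (your compact identity is exactly the paper's, just with the two standalone monomials absorbed into the $-1$'s), expand via Lemma~\ref{cornerpolynomial}, and regroup by corner index. The one place you make life harder than necessary is at $k=\ell$: the paper applies $\swap$ to the whole Laurent polynomial first and only then regroups the resulting product into the $B(k)$, so the potential collisions you flag (when $\rho_{\ell+1}=\rho_\ell-1$ or $\gamma_{\ell-1}=\gamma_\ell-1$) are handled uniformly---the would-be cancelled monomials simply contribute equal factors to numerator and denominator---and no case split is required.
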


\begin{proof}
    Once again, label the corners of the partition $\lambda$ as in \eqref{corners}. By Lemma \ref{quotientofswapsisswapofdifference} it suffices to compute $\text{swap}(F(\lambda)-F(\mu))$. Note that
    \begin{align}
        F(\lambda) - F(\mu) = - Q(\lambda) - \frac{\overline{Q}(\lambda)}{rs} + \frac{Q(\lambda)\overline{Q}(\lambda)(1-r)(1-s)}{rs} - \left( - Q(\mu) - \frac{\overline{Q}(\mu)}{rs} + \frac{Q(\mu)\overline{Q}(\mu)(1-r)(1-s)}{rs}\right). \notag
    \end{align}
    Since $\lambda$ and $\mu$ differ by exactly one cell $(\rho_\ell,\gamma_\ell)\in\lambda$, we have
    \begin{align}
        Q(\mu) = Q(\lambda) - r^{\rho_\ell}s^{\gamma_\ell}. \notag
    \end{align}
    Substituting,
    \begin{align}
        F(\lambda) - F(\mu)
        &= -r^{\rho_\ell}s^{\gamma_\ell} - r^{-\rho_\ell - 1}s^{-\gamma_\ell - 1} + r^{-\rho_\ell - 1}s^{-\gamma_\ell - 1}Q(\lambda)(1-r)(1-s) \notag \\
        &\hspace{2in}+ r^{\rho_\ell - 1}s^{\gamma_\ell - 1}\overline{Q}(\lambda)(1-r)(1-s) - \frac{(1-r)(1-s)}{rs}. \notag
    \end{align}
    In terms of corner polynomials, 
    \begin{align}
        F(\lambda) - F(\mu) = - r^{\rho_\ell}s^{\gamma_\ell} - r^{-\rho_\ell - 1}s^{-\gamma_\ell - 1} + r^{-\rho_\ell - 1}s^{-\gamma_\ell - 1}C(\lambda) + r^{\rho_\ell}s^{\gamma_\ell}\overline{C}(\lambda) - \frac{(1-r)(1-s)}{rs}. \notag
    \end{align}
    Simplifying using Lemma \ref{cornerpolynomial} we have
    \begin{align}
        F(\lambda) - F(\mu) &= \sum_{k=1}^{\ell-1} r^{\rho_k - \rho_\ell}s^{\gamma_k - \gamma_\ell}
        + \sum_{k=\ell + 1}^{m} r^{\rho_k - \rho_\ell}s^{\gamma_k - \gamma_\ell}
        - \sum_{k=1}^{m+1} r^{\rho_k - \rho_\ell} s^{\gamma_{k-1} - \gamma_\ell} \notag \\
        &+ \sum_{k=1}^{\ell-1} r^{\rho_\ell - \rho_k - 1}s^{\gamma_\ell - \gamma_k - 1}
        + \sum_{k=\ell + 1}^{m} r^{\rho_\ell - \rho_k - 1}s^{\gamma_\ell - \gamma_k - 1}
        - \sum_{k=0}^{m} r^{\rho_\ell - \rho_{k+1} - 1} s^{\gamma_\ell - \gamma_{k} -1}. \notag
    \end{align}
    We can now apply the swap operation:
    \begin{align}
        \text{swap}(F(\lambda) - F(\mu)) =& \prod_{k=1}^{\ell-1} ((\rho_k - \rho_\ell) u - (\gamma_k - \gamma_\ell) v) \cdot ((\rho_\ell - \rho_k - 1) u - (\gamma_\ell - \gamma_k - 1) v) \notag \\
        &\cdot \prod_{k=\ell + 1}^{m} ((\rho_k - \rho_\ell) u - (\gamma_k - \gamma_\ell) v) \cdot ((\rho_\ell - \rho_k - 1) u - (\gamma_\ell - \gamma_k - 1) v) \notag \\
        &\cdot \left( \prod_{k=1}^m ((\rho_k - \rho_\ell) u - (\gamma_{k-1} - \gamma_\ell) v) \cdot ((\rho_\ell - \rho_{k+1} - 1) u - (\gamma_\ell - \gamma_k - 1) v) \right)^{-1} \notag \\
        &\cdot -uv \cdot ( (\rho_{m+1} - \rho_\ell) u - (\gamma_m - \gamma_\ell) v )^{-1} \cdot ( (\rho_\ell - \rho_{1} -1) u - (\gamma_\ell - \gamma_0 - 1) v)^{-1}. \notag
    \end{align}
    Splitting this into the three cases $k<\ell$, $k>\ell$, $k=\ell$ yields the desired result.
\end{proof}

\subsection{Comparison of Ratios}

So far, we have expressed ratios of both the Jack Plancherel measure and the equivariant edge measure as products over the corners of $\lambda$. It remains to show that these ratios are equal, and then use that information inductively to show that $\wjack(\lambda) = -\wmnop(\lambda)$. 

\begin{theorem} \label{ratiotheorem}
    %The Jack Plancherel measure ratio and the equivariant edge measure ratio are the same. More precisely,
    We have
    \begin{align}
        \frac{\wjack(\lambda)}{\wjack(\mu)} = \frac{\wmnop(\lambda)}{\wmnop(\mu)}.
    \end{align}
\end{theorem}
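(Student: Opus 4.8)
The plan is to combine Propositions~\ref{JPWR} and~\ref{MNOPWR}, which already express both sides as products over the $m$ non-artificial inside corners of $\lambda$; thus it suffices to show $\prod_{k=1}^{m}A(k)=\prod_{k=1}^{m}B(k)$, equivalently $\prod_{k=1}^{m}A(k)/B(k)=1$. I would handle the corners $k\neq\ell$ first by telescoping, and dispose of the single corner $k=\ell$ at the end by a direct computation.

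For $k\neq\ell$, a term-by-term comparison of~\eqref{eq:JPWR} with the formula for $B(k)$ in Proposition~\ref{MNOPWR} reveals that $A(k)$ and $B(k)$ have the same numerator (the same two linear forms, merely written in the opposite order) and that their denominators share one common linear factor; hence $A(k)/B(k)$ collapses to a ratio of two linear forms. Explicitly, putting
\[
  P(j)=(\rho_\ell-\rho_j-1)u-(\gamma_\ell-\gamma_{j-1}-1)v,\qquad
  R(j)=(\rho_\ell-\rho_j)u-(\gamma_\ell-\gamma_{j-1})v,
\]
one gets $A(k)/B(k)=P(k+1)/P(k)$ for $k<\ell$ and $A(k)/B(k)=R(k)/R(k+1)$ for $k>\ell$. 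Both telescope, so
\[
  \prod_{k\neq\ell}\frac{A(k)}{B(k)}=\frac{P(\ell)}{P(1)}\cdot\frac{R(\ell+1)}{R(m+1)},
\]
a ratio of four explicit linear forms; substituting the artificial-corner values $\rho_{m+1}=\gamma_0=-1$, the inner ones become $P(\ell)=-\bigl(u+(\gamma_\ell-\gamma_{\ell-1}-1)v\bigr)$ and $R(\ell+1)=(\rho_\ell-\rho_{\ell+1})u$, while $P(1)$ and $R(m+1)$ keep the boundary data $\rho_1,\gamma_{\ell-1}$ and $\rho_{\ell+1},\gamma_m$. (If $\ell=1$ or $\ell=m$ the relevant product is empty and equals $1$, consistently with $P(\ell)/P(1)$ or $R(\ell+1)/R(m+1)$ collapsing.)

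It remains to verify, by substituting into the closed forms, that the $k=\ell$ factor $A(\ell)/B(\ell)$ --- with $A(\ell)$ from~\eqref{eq:JPWR} and $B(\ell)$ from Proposition~\ref{MNOPWR} --- is exactly $P(1)R(m+1)/\bigl(P(\ell)R(\ell+1)\bigr)$; multiplying by the telescoped product then yields $\prod_{k=1}^{m}A(k)/B(k)=1$, which is the claim. In this computation the extraneous factors $(\rho_\ell-\rho_{\ell+1}-1)u+v$, $(\gamma_\ell-\gamma_{\ell-1})v$ and $uv$ all cancel between $A(\ell)$ and $B(\ell)$, leaving precisely the product of $P(1)$ and $R(m+1)$ over $(\rho_\ell-\rho_{\ell+1})u$ and $u+(\gamma_\ell-\gamma_{\ell-1}-1)v$; I would also note that the degenerate ``isolated corner'' case of Proposition~\ref{JPWR} (Case~2, $\rho_{\ell+1}=\rho_\ell-1$, $\gamma_{\ell-1}=\gamma_\ell-1$) needs no separate argument, since the $k=\ell$ formula in~\eqref{eq:JPWR} already reduces to $1/(uv)$ there. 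The telescoping is routine once $P$ and $R$ are identified; the main obstacle is this last step, because $A(\ell)$ and $B(\ell)$ both carry the artificial corners $\rho_{m+1}=\gamma_0=-1$ and several spurious factors and signs that must cancel exactly, so the bookkeeping must be done with care.
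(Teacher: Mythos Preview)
Your proposal is correct and follows essentially the same approach as the paper: both arguments compare $A(k)$ and $B(k)$ factor by factor for $k\neq\ell$, observe that only one linear form differs in each denominator, telescope the resulting ratios (your $P(k+1)/P(k)$ and $R(k)/R(k+1)$ are exactly the reciprocals of the paper's $T_A(k)/T_B(k)$ and $T^A(k)/T^B(k)$), and then check directly that the boundary terms of the telescope are absorbed by $A(\ell)/B(\ell)$. The only cosmetic difference is that you package the telescoping via the auxiliary functions $P$ and $R$, whereas the paper names the individual surviving factors; the content is identical.
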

\begin{proof}
    % It suffices to verify that 
    % \begin{align}
    %     \frac{\frac{\wjack(\lambda)}{\wjack(\mu)}}{\frac{\wmnop(\lambda)}{\wmnop(\mu)}} = 1. \notag
    % \end{align}
    We have
    \begin{align}
        \frac{\frac{\wjack(\lambda)}{\wjack(\mu)}}{\frac{\wmnop(\lambda)}{\wmnop(\mu)}} = \prod_{k = 1}^m \frac{A(k)}{B(k)} \notag
    \end{align}
    by Proposition \ref{JPWR} and Proposition \ref{MNOPWR}. We simplify this in three cases: $k<\ell$, $k>\ell$, and $k=\ell$. When $k<\ell$, all but one term of $A(k)$ agrees with $B(k)$. The remaining term in $A(k)$ is 
    \begin{align}
        T_A(k) := \frac{1}{(\rho_\ell-\rho_k - 1)u - (\gamma_\ell - \gamma_{k-1} - 1)v} \notag
    \end{align}
    and for $B(k)$ is 
    \begin{align}
        T_B(k) := \frac{1}{(\rho_\ell-\rho_{k+1} - 1)u - (\gamma_\ell - \gamma_{k} - 1)v}. \notag
    \end{align}
    Note that $T_B(k) = T_A(k+1)$. So via traditional cancellation and telescoping,
    \begin{align}
        \prod_{k = 1}^{\ell-1} \frac{A(k)}{B(k)} = \prod_{k = 1}^{\ell-1} \frac{T_A(k)}{T_B(k)} = \frac{T_A(1)}{T_B(\ell - 1)} = \frac{- u - (\gamma_\ell - \gamma_{\ell-1} - 1)v}{ (\rho_\ell - \rho_1 - 1)u - (\gamma_{\ell} - 1)v}. \notag
    \end{align}
    We proceed analogously with the $k>\ell$ case. Define 
    \begin{align*}
        T^A(k) &= \frac{1}{(\rho_\ell - \rho_{k+1})u - (\gamma_\ell - \gamma_k)v} &&
    \text{and}&
        T^B(k) &= \frac{1}{(\rho_\ell - \rho_{k})u - (\gamma_\ell - \gamma_{k-1})v}. 
    \end{align*}
    Then
    \begin{align}
        \prod_{k = \ell + 1}^{m} \frac{A(k)}{B(k)} = \prod_{k = \ell + 1}^{m} \frac{T^A(k)}{T^B(k)} = \frac{T^A(m)}{T^B(\ell + 1)} = \frac{(\rho_\ell - \rho_{\ell + 1})u}{(\gamma_\ell - \gamma_m)v - (\rho_\ell)u}. \notag
    \end{align}
    \begin{remark}
        \normalfont{When $\lambda$ has zero corners, i.e. when $m = 0$, the products above are empty. When $\lambda$ has one corner then one can verify that the two expressions above are both one.}
    \end{remark}
    Now
    \begin{align}
        \frac{\frac{\wjack(\lambda)}{\wjack(\mu)}}{\frac{\wmnop(\lambda)}{\wmnop(\mu)}} = \prod_{k = 1}^m \frac{A(k)}{B(k)} = \frac{(-1)((\gamma_\ell - \gamma_{\ell-1} - 1)v + u)}{(\gamma_{\ell} - 1)v - (\rho_\ell - \rho_1 - 1)u} \cdot \frac{(\rho_\ell - \rho_{\ell + 1})u}{(\gamma_\ell - \gamma_m)v - (\rho_\ell)u} \cdot \frac{A(\ell)}{B(\ell)}. \notag
    \end{align}
    We cancel, revealing 
    \begin{align}
        \frac{\frac{\wjack(\lambda)}{\wjack(\mu)}}{\frac{\wmnop(\lambda)}{\wmnop(\mu)}} = 1. \notag
    \end{align}
\end{proof}

\begin{proof}[Proof of Theorem \ref{mainresult}]
    We proceed by induction on the size of a partition using the partition $\nu$ with one box as our base case. Let $s$ be the unique cell in $\nu$. We have
    \begin{align}
        \wjack(\nu) = \frac{1}{h^*(s)h_*(s)} = \frac{1}{uv} \notag
    \end{align}
    and
    \begin{align}
        \wmnop(\nu) &= \swap\left(-Q(\nu) - \frac{\overline{Q}(\nu)}{rs} + \frac{Q(\nu)\overline{Q}(\nu)(1-r)(1-s)}{rs}\right) = \text{swap}\left( - \frac{1}{s} -  \frac{1}{r} \right) = -\frac{1}{uv} \notag
    \end{align}
    as we expect. Suppose that $\wjack(\mu) = -\wmnop(\mu)$ for all partitions of size $n-1$. By Theorem \ref{ratiotheorem}, given a partition $\lambda$ of size $n$, we know that
    \begin{align}
        \frac{\wjack(\lambda)}{\wjack(\mu)} = \frac{\wmnop(\lambda)}{\wmnop(\mu)} \notag
    \end{align}
    for any $\mu\subseteq\lambda$ of size $n-1$. Multiplying both sides by $\wjack(\mu) = -\wmnop(\mu)$ yields
    \begin{align}
        \wjack(\lambda) = -\wmnop(\lambda). \notag
    \end{align}
\end{proof}

\begin{acknowledgements}
    The authors would like to thank Golnaz Bahrami, Houcine Ben Dali, Cesar Cuenca, Maciej Do\l\k{e}ga, Martijn Kool, Dan Romik, Chris Sinclair, and Kayla Wright for helpful conversations. The first author was supported by NSF grant DMS-2039316.
\end{acknowledgements}

%%%%%%%%%%%%%%%%%%%%%%%%%%%%%%%%%%%%%%%%%%%%%%%%%%%%%%%%%%%%%%%

%\newpage

%\printbibliography
\bibliography{maincopy.bib}

\end{document}